\newcommand{\E}{\mathbb{E}}
\DeclareMathOperator{\Var}{\text{Var}}
\DeclareMathOperator{\Cov}{\text{Cov}}
\newtheorem{theorem}{Theorem}
\newtheorem{lemma}{Lemma}
\newtheorem{remark}{Remark}
\newcommand{\bd}{\mathbf}
\begin{document}

\title{Linear Estimation of Treatment Effects in Demand Response: An Experimental Design Approach}

\author{Pan~Li,~\IEEEmembership{Student~Member,~IEEE,}~and~Baosen Zhang,~\IEEEmembership{Member,~IEEE}
	\thanks{This work was supported by in part by NSF grant CNS-1544160 and the University of Washington Clean Energy Institute.}
	\thanks{The authors are with the Department of Electrical Engineering, University of Washington, Seattle, WA 98195, USA (e-mail: \{pli69,zhangbao\}@uw.edu).}}


\maketitle

\begin{abstract}
Demand response aims to stimulate electricity consumers to modify their loads at critical time periods.
In this paper, we consider signals in demand response programs as a binary treatment to the customers and estimate the average treatment effect, which is the average change in consumption under the demand response signals. More specifically, we propose to estimate this effect by linear regression models and derive several estimators based on the different models. From both synthetic and real data, we show that including more information about the customers does not always improve estimation accuracy: the interaction between the side information and the demand response signal must be carefully modeled. In addition, we compare the traditional linear regression model with the modified covariate method which models the interaction between treatment effect and covariates. We analyze the variances of these estimators and discuss different cases where each respective estimator works the best. The purpose of these comparisons is not to claim the superiority of the different methods, rather we aim to provide practical guidance on the most suitable estimator to use under different settings.
Our results are validated using data collected by Pecan Street and EnergyPlus.
\end{abstract}
%

\section*{Nomenclature}
\addcontentsline{toc}{section}{Nomenclature}
\begin{IEEEdescription}[\IEEEsetlabelwidth{$\mathbf{v}_i = (T_i - p) \mathbf{x}_i$}]
	\item[$N$] Total number of samples.
	\item[$i$] Index of samples, $i = 1,2,..., N$.
	\item[$Y_i(\mathbf{Y})$] Energy consumption (vector form).
	\item[$T_i$] Binary DR signal~(treatment signal) for sample $i$.
	\item[$\mathbf{x}_i (\mathbf{x})$] Covariates of sample $i$ (matrix form).
	\item[$d$] Dimension of the covariate.
	\item[$g(\mathbf{x}_i) \mbox{ or } g_i$] Treatment effect of DR signal on sample $i$.
	\item[$\bar{g}=\frac{1}{N} \sum_i g_i$] Average treatment effect (ATE): the average change in consumption because of demand response signals.
	\item[$f(\mathbf{x}_i) \mbox{ or } f_i$] Baseline consumption~(without DR) of sample $i$.
	\item[$p$] Treatment assignment probability.
	\item[$\bm{\beta}$] Weights of a linear regression.
	\item[$\bd w$] Regressor of a linear regression.
	\item[$\bm W$] Regression matrix in a linear regression.
	\item [$\epsilon_i$] Noise in sample $i$.
	\item[$Z_i = T_i - p$] Centered treatment signal for sample $i$.
	\item[$\mathbf{v}_i = (T_i - p) \mathbf{x}_i$] Modified covariate for sample $i$.
	\item[$\mu = \frac{\sum_i x_i}{N}$] Empirical mean of covariates.
\end{IEEEdescription}

\section{Introduction}
\label{sec:intro}
One of the most interesting changes taking place in the electrical grid is that demand is no longer treated as fixed loads. Instead, operators are starting to explore \emph{demand response (DR)}, an umbrella term capturing mechanisms that modify the electricity consumption of consumers to balance the changes in generation.
Demand response has received significant attention from the community in recent years~(e.g. see \cite{Siano2014} and the references within). In typical implementation of DR programs, customers receive a signal such as a change in price or a message requesting modifications in electricity usage. An effective DR program improves the efficiency and sustainability of power systems by allowing utilities and operators to leverage flexibility in the load rather than relying entirely on the control of generation~\cite{DR0,DR2,Dai,DR3,DR1,LiEtAl2011}.
%

Most of the work in the literature have viewed the DR problem from optimization or market design perspectives. For example, authors in \cite{LiEtAl2011,QianEtAl2013} considered how to optimize the social welfare; and authors in \cite{SaadEtAl2012, LiEtAl2017} have considered how to create an efficient market for demand response. In all these setups, customers' responses to demand response signals are either assumed to be known to the operators, or at least known to themselves. \emph{However, in practice, accurately estimating how customers respond to signals is a crucial step to the design and evaluation of DR programs}. As a motivating example used throughout this paper, consider a building that participates in a demand response program. The building manager may receive a signal and take a set of actions, but the consumption of building depends on a multitude of (exogenous) variables, including external temperature, occupancy, etc. Therefore estimating the change in consumption because of a DR signal is not a trivial problem.

The lack of information about users' behaviors is a fundamental difficulty in judging the impact of demand response programs and raises questions about their effectiveness~\cite{WooEtAl2006}. Fortunately, new sources of data such as smartmeters and other sensors provide the possibility to understand customer responses better. In this paper, we take a statistical view on how these data should be used to estimate responses to DR signal. \emph{In particular, we provide guidance on various popular estimation methodologies and show that more data, if used naively, may actually degrade estimator performances.}


We adopt an \emph{experimental design approach} and focus on the problem of estimating customer responses to DR signals from observational data. The DR signal is perceived as a \emph{treatment} and is applied to some of the users~\cite{WLS}. The quantity of interest in this model is the \emph{average treatment effect (ATE)}, representing the average response of the customers to the treatment. The ATE quantifies the impact~(or effectiveness) of the DR program. The fundamental question we investigate here is to determine if the ATE is statistically significant, and if so, estimating its value. We focus on linear regression models that take observation data as inputs and energy consumption as outputs. We choose to emphasize linear models in our analysis because of their ubiquity in both theory and practice, and because they can be applied with relatively small amount of data~\cite{causalreview1,Zub_2012}.

The data used in these regressions consist of two types of variables: a \emph{binary treatment variable} indicating whether a DR signal has been received,\footnote{We can extend our results to continuous values of DR signals, although in practice most signals only consists of a few discrete values. For example, we can use 1 to represent receiving a text (0 for not receiving it) or 0 for normal price and $1$ for high prices.} and the rest of the variables are referred collectively to as \emph{covariates}. These covariates represent the rest of the observations that are available, for example, temperature, HVAC status, other appliance information, etc. Our problem focuses on \emph{inference}, where we only want to find the relationship between the DR signal and energy consumption, while the effect of the covariates are not of interest. An important challenge in this inference problem is that the impact of covariates on consumption is usually much larger than the impact of the DR signal.


We present three linear regression estimators of the ATE: 1) simple linear regression~(difference-in-mean estimator~\cite{causalreview1}), 2) multiple linear regression and 3) modified covariate method~\cite{covariatenew}. The simple linear regression estimator only uses information about whether a DR signal is sent and ignores other information, whereas  multiple linear regression incorporates the covariates by assuming a linear relationship between them and energy consumption. At first glance, the latter seems to be an natural improvement over the former since more data are used. However, as we show in the paper, multiple linear regression actually preforms worse in some settings.

The reason that multiple linear regression can perform badly is because of two reasons. The first is that the underlying function relating DR signals and covariates to consumption is almost never linear. The second is that in practice the number of DR signals received by any one user is usually small. By naively fitting a linear model, we may inadvertently introduce more noise than information, especially because the effect of DR is observed only for handful of times. To leverage the covariates information even when the underlying model is not linear, we introduce the \emph{modified covariates method}, striking a balance between simple and multiple linear regression. We validate our results using both synthetic data, building data, and data from Pecan Street~\cite{pecan}. The main contributions of this paper are:
\begin{enumerate}
  \item We study the problem of estimating the average response of users to a DR signal by considering three different estimators: simple linear regression, multiple linear regression, and modified covariate method. We show which method is the best to use under different information and signaling frequency settings. In particular, we show that using more data does not necessarily improve the estimation of the average response of DR signals if the underlying consumption model is not carefully considered.
  \item We provide theoretical guidelines on the results that are validated using both synthetic and real data from different sources. 
\end{enumerate}

The rest of the paper is organized as follows. Section \ref{sec:motiv} presents the motivation of this paper using a large building as an example. Section \ref{sec:setup} introduces the linear model and assumptions throughout this paper. Section \ref{sec:lr} presents several different estimators based on various forms of linear regression, followed by the performance analysis based in nominal variance in Section \ref{sec:varanalysis}. Section \ref{sec:sim} details the case study on the performance of these estimators. Section \ref{sec:conclusion} concludes the paper.

\section{Motivation}\label{sec:motiv}
The fact that adding more data does not always improve estimation of the average treatment effect is not new. This can be seen as the difference between prediction and inference \cite{Freedman2009}. Adding more data will almost always improve prediction. But in our context, we are trying to estimate \emph{the effect of a single variable, the DR signal, on the output}. Naively using more knowledge about customers may actually ``drown out'' the relationship between the DR signal and customer consumption. A message from this paper is that the interactions between covariates and the treatment variable need to be carefully modeled to correctly leverage additional information contained in the covariates.

The need to provide inference also limits us in our choice of algorithms. Popular machine learning tools such as neural networks and regression trees often improve prediction, but they are not easy to interpret and use in estimation~(see \cite{Li17} and the references within). Hence we study three types of linear models in this paper: simple linear regression (SLR), multiple linear regression (MLR), and modified covariate method (MCM). SLR has only one regressor which is the indicator of whether a DR signal is received, MLR has multiple regressors including the DR signal and the covariates that may affect consumption level. MCM is a multiple linear regression model with modified covariates discussed in detail in Section \ref{sec:lr}.

As a motivating example, we consider a large multifamily residential building with heating and cooling devices that participates in a demand response program. There are 8 apartments with central corridor on each floor, and office on first floor. An illustration of the building is shown in Fig. \ref{building}. Here we consider time as discrete intervals~(on the scale of hours), and during any interval the building may receive a DR signal. This model of the building is constructed in the EnergyPlus software~\cite{sim} based on a Seattle residence. We then use EnergyPlus to generate ground truth consumption data about the operation of the building. We add a linear term that represents the effect of demand response to create consumption data under a DR signal. The covariates of the building includes temperature, details of the fabrics of the roof, HVAC information, etc... These sample points are collected from many different time slots during various days.

Here we run both SLR and MLR to compare their performances. The magnitude of the demand response provided by the building is approximately $20\%$ of its total consumption. When the building receives a DR signal fairly often, for example about 50\% of the time periods, MLR outperforms SLR. But in the more likely scenario when the building rarely receives DR signal, SLR tends to out perform MLR. For example, when the building only receives DR signals 15\% of the time, the estimation error (of the average impact of the DR signal) is around 7.6\% from SLR, but 24.5\% from MLR.
This also motivates us to introduce the MCM estimator in Section~\ref{sec:lr}, which under many settings is more robust than MLR but more data efficient than SLR.

\begin{figure}[!t]
\centering
\includegraphics[width = 0.8\columnwidth]{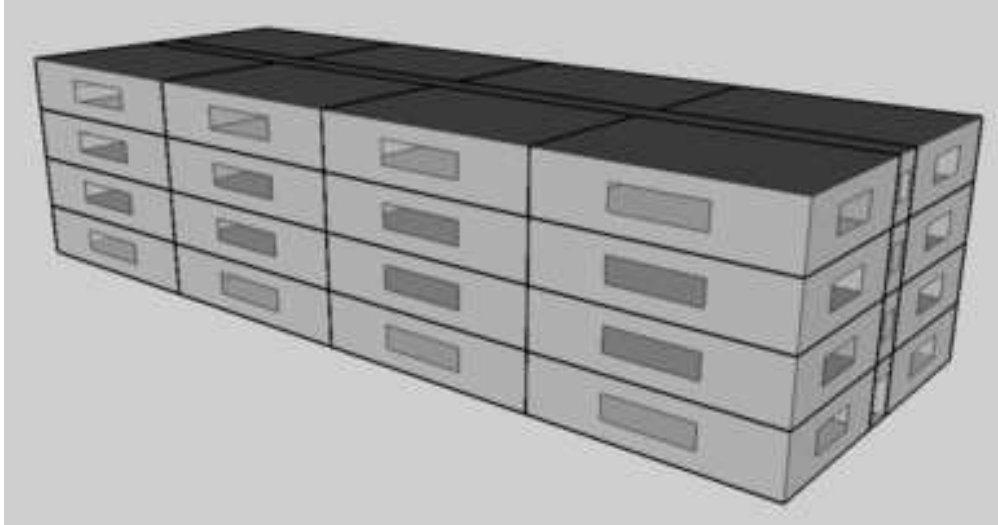}
\caption{An illustration of the building used in this paper. There are four floors in this multi-family residential building, with total floor area 3135 $m^2$.}
\label{building}
\end{figure}

\section{Problem Setup}\label{sec:setup}


\subsection{Model}\label{subsec: model}

Formally, we define the treatment effect of DR signals using the standard Neyman-Rudin model \cite{causalreview2}. In this model, the outcome $Y_i$ (in our case is the energy consumption) takes on one of the two values: $Y_i(0)$ or $Y_i(1)$, depending on whether a DR signal is sent or not.  {Note that an observation can represent a user when the samples are extracted from a pool of different users , or it can represent a time slot when the samples are from the same user but during different time periods.} Throughout this paper, we refer to sample $i$ as the particular observation with index $i$, where the context of the observation should always be clear from the problem setting. In total, there are $N$ observations.

Let $T_i$ be the binary indicator variable of the DR signal. Then the two values of $Y_i$, $Y_i(0)$ and $Y_i(1)$, are the potential outcomes either under the DR signal (when $T_i = 1$) or with no DR signal (when $T_i = 0$). Using the experimental design terms, we sometime refer to all of the observations that have $T=1$ as the treatment group and the observations with $T=0$ as the control group.

In most settings, the consumption $Y_i$ also depend on the covariates $\mathbf{x}_i$, and we can explicitly write $Y_i$ as a function of $T_i$ and $\mathbf{x}_i$ as:
\begin{subequations}
\begin{align}
Y_i & = T_iY_i(1) + (1-T_i)Y_i(0) \label{model1} \\
&= f(\mathbf{x}_i) + g(\mathbf{x}_i)T_i \label{model2},
\end{align}
\end{subequations}
where $f(\mathbf{x}_i) = Y_i(0)$, $g(\mathbf{x}_i)= Y_i(1) - Y_i(0)$. For the given $\mathbf{x}_i$, $g(\mathbf{x}_i)$ is the \emph{treatment effect} of the DR signal: it is the difference in consumption for an observation with and without the signal. Most of the time, we are after the \emph{averaged treatment effect (ATE)}, which is the empirical mean of $g(\mathbf{x}_i)$:
\begin{equation}
	{\bar{g}} = \frac{1}{N}\sum_{i=1}^{N}g_i(\mathbf{x}_i).
\end{equation}
In addition, we call $f_i(\mathbf{x}_i)$ the main effect for $i$.
In the following sections, we write $f(\mathbf{x}_i)$ as $f_i$ and $g(\mathbf{x}_i) $ as $ g_i$ for notational simplicity.

%
%


\subsection{Assumptions}

We assume a randomized trial scenario, where the treatment assignment strategy is independent of the covariates of the samples. This means that the treatment $T_i$ is a Bernoulli random variable which takes value 1 with probability $p$ and 0 with probability $1-p$, independent to everything else~\cite{causalreview2}.

 Note that this assumption might seem strict, but it is not beyond unreasonable. First, if a utility or operator do not know the treatment effect of DR, it is natural to model the process as a random trial. For example, utilities typically ask buildings for demand response based on the condition of the entire system and there is no direct relationship between the requests and the state of the building, so the requests can be thought as randomized treatments. Second, if one observation group is significantly preferred for demand response over another (e.g. commercial vs. residential users), the groups can be studied separately and modeled as receiving independent signals. Third, from the real data obtained from Pecan Street~( text messages of high prices), the users are coming in and out of the DR programs continuously without any obvious pattern, motivating the independence assumption.

\section{Linear Regression}\label{sec:lr}

%
 In this section, we describe three different linear methods (shown in Table~\ref{table: inout}) to estimate the ATE~($\bar{g}$): SLR on treatment variable, MLR on both the treatment variable and the covariates, and a regression using MCM introduced in \cite{covariate}.
 \begin{table}[!ht]
  \renewcommand{\arraystretch}{1.3}
  \caption{ Input for the least square estimation.}
  \centering
  \begin{tabular}{|c|c|}
    \hline \bfseries Model & \bfseries Input \\
    \hline
    SLR & $T_i$ and intercept  \\
    \hline
    MLR & $T_i$, $\mathbf{x}_i$ and intercept  \\
    \hline
    MCM & modified covariate $(T_i-p)\mathbf{x}_i$ and intercept \\
    \hline
  \end{tabular}
	\begin{tablenotes}
		\small
		\item $T_i$: binary indicator of DR signal, $p$: probability of receiving signal, and $\bd x$: covariates containing side information
	\end{tablenotes}

	  \label{table: inout}
 \end{table}

In all of these methods, we use least square (LS) to compute the estimator in the linear regression. Algorithm \ref{algo0} depicts the estimation procedure for the following canonical linear regression model:
\begin{equation}\label{linreg0}
\begin{aligned}
	Y_i & = \bd w_i ^{\top} \bm{\beta} + \epsilon_i\\
	& = \begin{bmatrix}
	T_i \\
	\mathbf{x}_i\\
	1
	\end{bmatrix} ^{\top}
	\begin{bmatrix}
	\beta^{(1)}\\
	\bm{\beta}^{(\mathbf{x})}\\
	\beta^{(0)}
	\end{bmatrix}
	 + \epsilon_i,
\end{aligned}
\end{equation}
{where $Y_i$ is observed power consumption, $\epsilon_i$ is the noise, and $\bd w_i$ is the regressor variable. The regressors $\bd w_i$ include a binary variable $T_i$ that indicates the treatment assignment, a possible set of covariates $\mathbf{x}_i$ such as temperature and building device information, and an intercept. Note that since an intercept is included, LS estimator is the same if we use $T_i - p$ as the regressor instead of $T_i$, where $p= \frac{1}{N}\sum_i T_i $.} In addition, parameter $\bm \beta$ is the weight associated with the regressor variable $\bd w_i$ where $\beta^{(1)}$ turns out to be the ATE.
	\begin{algorithm}[h]
		\caption{Linear regression estimation}
		\label{algo0}
		\begin{algorithmic}[1]
			\State \textbf{Input}: $N$ observations $(Y_i,\bd w_i)$ for $i=1,\dots,N$. Stacking into output vector $\bd Y=[Y_1,\dots,Y_n]$,  and regressor (input) matrix $\bd W$ with rows $\bd w_1^{\top}, \bd w_2^{\top},\dots,\bd w_N^{\top}$ variable. The weights are denoted by $\bm{\beta} \in \mathcal{R}^d$.
			\State The least square estimation for $\bm{\beta}$ is:
			\begin{equation}
				\hat{\bm \beta} = (\bm{W} ^{\top}\bm{\bm{W}} )^{-1}\bm{W} ^{\top}\bm{Y}.
			\end{equation}
			\State \textbf{Ouput}: The estimator $\hat{\beta}^{(1)}$, the first element of $\hat{\bm{\beta}}$, which will be the estimate of the ATE for appropriate definition of $\bm{w}$.
		\end{algorithmic}
	\end{algorithm}

For the three regressions in Table~\ref{table: inout}, their differences are in how the regressor ($\bd w$) and noise ($\epsilon$) are defined. The rest of this section show how they can be written in the canonical form in \eqref{linreg0} and how to find the ATE.


For all three regressions the LS estimator in Algorithm \ref{algo0} turns out to be consistent, i.e., as the number of samples grows, it will eventually converge to the correct value \cite{booklg}. Therefore, to compare their performances, we look at the \emph{variances} of the estimators, in particular in how fast the variances decrease as the number of samples increases.

\subsection{SLR on Treatment}
Definte $p = \frac{1}{N}\sum_i T_i$, which means that $pN$ samples are getting treatments (the DR signal). It can also be interpreted as the probability that each sample gets treated. The two interpretation are equivalent during the estimation procedure~\cite{Freedman2009}. We then rewrite (\ref{model2}) into the following form by centering the variables:
\begin{equation}\label{eq:linreg1}
\begin{aligned}
Y_i & = (T_i - p)\bar{g} + \bar{g} + \bar{f} + T_i(g_i-\bar{g}) + (f_i-\bar{f}) \\
& = \begin{bmatrix}
T_i-p\\
1
\end{bmatrix}^{\top} \begin{bmatrix} \bar{g} \\ \bar{g} + \bar{f} \end{bmatrix}  + \epsilon_i\\
& = \mathbf{w}_i^{\top} \bm{\beta}  + \epsilon_i
\end{aligned}
\end{equation}
where the noise $\epsilon_i = T_i(g_i-\bar{g}) + (f_i-\bar{f})$, with $\bar{g} = \frac{\sum_i g_i}{N}$ and $\bar{f} = \frac{\sum_i f_i}{N}$. In canonical form, $\bd w_i= \begin{bmatrix}
T_i-p & 1
\end{bmatrix}^T$ and $\bm \beta=\begin{bmatrix} \bar{g} & \bar{g} + \bar{f} \end{bmatrix}^T$. Let $Z_i=T_i-p$ and using the fact that $\sum_i Z_i = 0$, the LS estimator for the average treatment effect $\bar{g}$ is:
\begin{equation}\label{est1}
\begin{aligned}
\hat{\bar{g}}_{SLR}
& = \hat{\beta}^{(1)}\\
&  = \frac{\sum_iT_i(g_i+f_i)}{\sum_iT_i} - \frac{\sum_i(1-T_i)f_i}{\sum_i(1-T_i)}.
\end{aligned}
\end{equation}
The result in (\ref{est1}) shows that the estimator from SLR is the same as the difference-in-mean estimator~\cite{causalreview1}. It simply takes the difference between the average outcome between the treatment group and the control group to estimate the impact of the DR signal. However, this estimator is data inefficient, since it ignores any side information about the covariates that may be available.



\subsection{MLR on Treatment and Covariates}
Now suppose that we know some covariates of each customer $i$ and they are denoted by $\mathbf{x}_i$. A multiple linear regression model is carried out when both the treatment variable $T_i$ and $\mathbf{x}_i$ are included as regressors in \eqref{linreg0}:
\begin{equation}
	Y_i = \begin{bmatrix}
	T_i-p\\
	\mathbf{x}_i\\
	1 \\
	\end{bmatrix}^{\top} \bm{\beta} + \epsilon_i,
\end{equation}
and the estimate of ATE is again the first parameter of the estimate of $\bm{\beta}$: $\hat{\bar{g}}_{MLR}=\hat{\beta}^{(1)}$. However, the noise term maybe significant if the underlying true model is not linear.

The simulation results of the MLR estimator is presented in section \ref{sec:sim}. If we compare SLR and MLR by the reduction in the variance of the estimators, the latter does not always improve the estimation performance compared with the former. This phenomena is mainly due to the fact that if the underlying true relationship is nonlinear, modeling covariates has having a linear relationship with consumption introduces large noises, especially if $p$ is small. More detailed theoretical discussions are presented in Section \ref{sec:varanalysis}.

\subsection{Modified Covariate Method} \label{CM}
Even if including covariates directly into the linear model does not necessarily improve performance, it is still desirable to somehow use the covariate information. One possible improvement is to \emph{only assume linearity in the treatment effect for each customer $i$}. That is, only $g_i$ is linear in the covariates, and no assumption is made about how the main effect $f_i$ depend on the covariates.

We thus use a new method called \emph{Modified Covariate Method} (MCM), proposed in \cite{covariate}. This method assumes that the treatment effect is linear in the covariate, i.e., $g_i = \mathbf{x}_i^{\top}\gamma$, but we do not impose any conditions on $f_i$. In this case, the average treatment effect is:
\begin{equation}
\bar{g}_{MCM} = \frac{1}{N}\sum_i \mathbf{x}_i^{\top}\mathbb{\gamma}.
\end{equation}

We then have the following linear regression model: $Y_i = f_i + T_i\mathbf{x}_i^{\top}\gamma$.
Again, rewrite it in a canonical form:
\begin{equation}\label{MCM1}
\begin{aligned}
Y_i & = (T_i - p)\mathbf{x}_i^{\top}\bm \gamma + \bar{f} + p\bar{\mathbf{x}}^{\top}\bm \gamma + (f_i-\bar{f}) + p(\mathbf{x}_i-\bar{\mathbf{x}})^{\top}\bm{\gamma}\\
& = \begin{bmatrix}
(T_i-p) \mathbf{x}_i\\
1 \\
\end{bmatrix}^{\top} \begin{bmatrix} \bm{\gamma} \\ \beta^{(0)} \end{bmatrix} + \epsilon_i,
\end{aligned}
\end{equation}
where the noise $\epsilon_i = (f_i-\bar{f}) + p(\mathbf{x}_i-\bar{\mathbf{x}})^{\top}\bm \gamma$. We refer to $(T_i-p) \mathbf{x}_i$ as the \emph{modified covariate}. The LS estimator is still consistent~\cite{covariate} and we can estimate the treatment effect of the DR signal as $\hat{\bar{g}}_{MCM} = \frac{1}{N}\sum_i \mathbf{x}_i^{\top}\hat{\bm \gamma}$.

There are two reasons that MCM is potentially useful in estimating the ATE. First, MCM trades off between how much covariate information to use and model simplicity. The covariate information is only used with respect to the treatment effect, which is a weaker assumption than MLR. On the other hand, MCM captures more information by still using the covariates, which makes it more data efficient than SLR. Secondly, its formulation fits an interesting regime in the context of demand response. For example, the DR capability of a building could very well proportional to the covariates (e.g., proportional to the  occupancy of the building).

\section{Variance Analysis}\label{sec:varanalysis}
All the discussed estimators in Section \ref{sec:lr} yield a consistent estimate for ATE. However, their performances vary with respect to the respective variances, which we study in detail in this section.
Note that in the following analysis we focus on the case where covariate $\mathbf{x}_i$ is one dimensional, i.e., it is a scalar. The analysis of multi-dimensional covariates is tedious and does not provide additional intuitions. We demonstrate the results for multi-dimensional covariates using simulations.
To facilitate the analysis, we ignore the higher order terms in the calculation and approximate variances by their second order Taylor expansions. For details, please see the Appendix.



We first investigate the performances of SLR and MLR in cases when ${x}_i$ interacts nonlinearly with the consumption data. The main result is presented in Theorem \ref{theorem1}, where we adopt the notation that $\Cov(\cdot,\cdot)$ stands for the empirical covariance between two vectors, i.e., $\Cov (\mathbf{f}, \mathbf{x}) = \frac{\sum_i (f_i-\bar{f})(x_i-\bar{x})}{N}$, and $\Cov (\mathbf{g}, \mathbf{x}) = \frac{\sum_i (g_i-\bar{g})(x_i-\bar{x})}{N}$.

\begin{theorem}\label{theorem1}
	If $\Cov (\mathbf{g}, \mathbf{x}) = 0$ or $p = 0.5$, then MLR always yields a better performance than SLR. Otherwise, the performance of the two estimators depends on the value of $p$ and $k = \frac{\Cov (\mathbf{f}, \mathbf{x})}{\Cov (\mathbf{g}, \mathbf{x})}$. Assuming WLOG that the covariate $x_i$ has unit variance, then:
	\begin{equation}\label{MLRvar0}
	\begin{aligned}
	\Var(\hat{\bar{g}}_{SLR}-\bar{g})- \Var(\hat{\bar{g}}_{MLR}-\bar{g}) = \frac{\Delta}{p(1-p)N},
	\end{aligned}
	\end{equation}
	where:
	\begin{equation} \label{Delta0}
	\begin{aligned}
	\Delta  &= (\Cov(\mathbf{f}, \mathbf{x}))^2 + 2(1-p)\Cov (\mathbf{g}, \mathbf{x})\Cov (\mathbf{f}, \mathbf{x}) \\
	+ & (2p-3p^2)(\Cov (\mathbf{g}, \mathbf{x}))^2.
	\end{aligned}
	\end{equation}
\end{theorem}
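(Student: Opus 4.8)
The plan is to treat $f_i$, $g_i$, and $x_i$ as fixed finite-population quantities and take every expectation and variance over the random treatment assignment, with the $T_i$ i.i.d.\ Bernoulli. Writing $Z_i = T_i - p$, the only moments that ever appear are $\E[Z_i] = 0$, $\E[Z_i^2] = p(1-p)$, $\E[Z_i^3] = p(1-p)(1-2p)$, together with independence of $Z_i, Z_j$ for $i\neq j$. The technical point flagged just before the statement is that $p = \tfrac{1}{N}\sum_i T_i$ and the regression ``denominators'' are themselves random, so both $\hat{\bar g}_{SLR}$ and $\hat{\bar g}_{MLR}$ are smooth (rational) functions of sample averages; I would expand each about its mean and keep only the $O(1/N)$ contribution to the variance, discarding $O(1/N^2)$ remainders. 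Throughout I take $x_i$ centered to zero mean (WLOG, by absorbing the mean into the intercept) and scaled to unit variance as in the statement.

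For SLR, substitute $Y_i = f_i + g_i T_i$ into the difference-in-means form~\eqref{est1}, Taylor-expand in the small quantities $\tfrac1N\sum_i Z_i$, $\tfrac1N\sum_i Z_i f_i$, $\tfrac1N\sum_i Z_i g_i$, and collect the $O(1/N)$ term of $\Var(\hat{\bar g}_{SLR}-\bar g)$; this reproduces the classical Neyman randomization variance written through $\Var(\mathbf f)$, $\Var(\mathbf g)$, $\Cov(\mathbf f,\mathbf g)$, with none of the $x$-covariances appearing, since SLR ignores $x_i$. For MLR I would invoke the Frisch--Waugh--Lovell theorem: $\hat\beta^{(1)} = \hat{\bar g}_{MLR}$ is the slope of $Y$ regressed on the component of the centered treatment $Z$ orthogonal to $x$ and the intercept. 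With $\hat\rho = \tfrac1N\sum_i Z_i x_i$ and residualized regressor $\tilde Z_i = Z_i - \hat\rho\, x_i$, we have $\hat{\bar g}_{MLR} = \big(\sum_i Y_i \tilde Z_i\big)\big/\big(\sum_i \tilde Z_i^2\big)$. Substituting $Y_i = f_i + g_i(Z_i+p)$ and expanding to second order --- using $\E[\hat\rho]=0$, $\E[\hat\rho^2] = p(1-p)/N$, and the third-moment identity above --- I would extract the $O(1/N)$ term of $\Var(\hat{\bar g}_{MLR}-\bar g)$. The genuinely new pieces relative to SLR are the cross-products of $\hat\rho$ with $\tfrac1N\sum_i Z_i f_i$ and $\tfrac1N\sum_i Z_i g_i$: these produce the $\Cov(\mathbf f,\mathbf x)$ and $\Cov(\mathbf g,\mathbf x)$ dependence, including the sign-indefinite term $2(1-p)\Cov(\mathbf g,\mathbf x)\Cov(\mathbf f,\mathbf x)$, while the $(2p-3p^2)$ coefficient comes precisely from $\E[Z_i^3]$.

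Subtracting the two variances, almost everything (the $x$-free terms) cancels and what remains is $\Delta/(p(1-p)N)$ with $\Delta$ as in~\eqref{Delta0}. For the special cases: if $\Cov(\mathbf g,\mathbf x)=0$ then $\Delta = (\Cov(\mathbf f,\mathbf x))^2 \ge 0$; if $p=0.5$ the $\E[Z_i^3]$ terms vanish and $\Delta = (\Cov(\mathbf f,\mathbf x) + \tfrac12\Cov(\mathbf g,\mathbf x))^2 \ge 0$, so MLR is never worse. Otherwise, dividing through by $(\Cov(\mathbf g,\mathbf x))^2>0$ turns $\Delta$ into the quadratic $k^2 + 2(1-p)k + (2p-3p^2)$ in $k = \Cov(\mathbf f,\mathbf x)/\Cov(\mathbf g,\mathbf x)$, whose discriminant equals $4(1-2p)^2$; hence its sign genuinely depends on both $p$ and $k$, which is the claimed dichotomy.

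The main obstacle is the bookkeeping in the MLR expansion: $\hat\beta^{(1)}$ is a ratio of quadratic forms in the $Z_i$ with fixed coefficients, and one must carefully classify each product of sample averages by its order in $N$ before squaring and taking expectations, so that the dropped remainder is legitimately $O(1/N^2)$. Getting the $\E[Z_i^3]$ contribution right is the delicate step, since it is the only place the asymmetry of the Bernoulli assignment enters and it is exactly what makes ``MLR always wins'' fail away from $p=0.5$; the rest is careful but routine algebra, which I would relegate to the Appendix.
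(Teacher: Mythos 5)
Your proposal is correct and follows essentially the same route as the paper: fixed potential outcomes with randomization over the Bernoulli assignment, a second-order (delta-method) approximation of the variance of the ratio-of-sums estimators keeping only the $O(1/N)$ term, and then a sign analysis of the resulting quadratic $\Delta$ (your factorizations $(\Cov(\mathbf f,\mathbf x))^2$ at $\Cov(\mathbf g,\mathbf x)=0$ and $(\Cov(\mathbf f,\mathbf x)+\tfrac12\Cov(\mathbf g,\mathbf x))^2$ at $p=0.5$, and your roots $k=-p$, $k=3p-2$ from discriminant $4(1-2p)^2$, agree with the paper's conditions $p=-k$ and $p=(2+k)/3$). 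The only substantive difference is that you make the MLR step explicit via Frisch--Waugh--Lovell residualization and the third moment $\E[Z_i^3]=p(1-p)(1-2p)$, whereas the paper simply asserts the MLR variance formula; your sketch is a legitimate way to fill that gap.
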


%

The proof for Theorem \ref{theorem1} is left to Appendix. Note that $k$ represents the intensity that how $f$ and $g$ are correlated: a negative $k$ implies negative correlation whereas a positive $k$ implies positive correlation. The important conclusion from Theorem \ref{theorem1} is that $\Delta$ can be both negative and positive. The sign of $\Delta$ includes many factors such as the choice of $p$ and the correlation between the responses and the chosen covariate. The simplest case is when $g_i$ is a constant. In this case, $\Delta \geq 0$, which means that MLR is at least as good as SLR, given any arbitrary assignment probability $p$. When $p \neq 0.5$ and $g_i$ is not a constant across all samples, the sign of $\Delta$ depends both on $\mathbf{g}$ and $\mathbf{f}$. Figure \ref{fig0} depicts the $(p,k)$ region that results in a negative $\Delta$, which means that MLR is worse than SLR.
\begin{figure}[!t]
	\centering
	\includegraphics[width = 0.8\columnwidth]{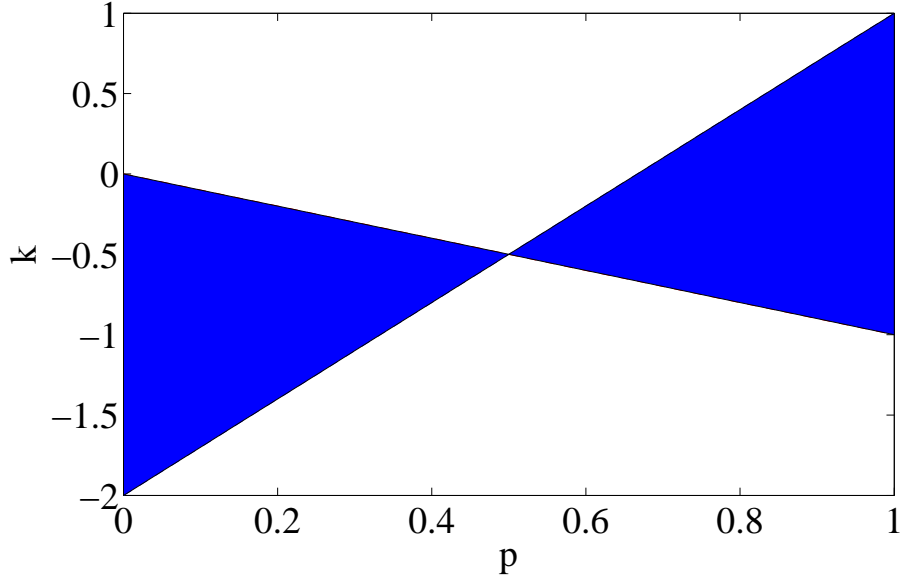}
	\caption{$(p,k)$ region (in blue) that results in a negative $\Delta$, where $p$ is the treatment assignment probability and $k = \frac{\Cov (\mathbf{f}, \mathbf{x})}{\Cov (\mathbf{g}, \mathbf{x})}$. The blue region also represent the pair of $(p,k)$ when SLR is better than MLR.}
	\label{fig0}
\end{figure}

From Fig.\ref{fig0}, we observe that the shaded region is asymmetric with respect to ratio $k$.
This is mainly due to the fact that the signal-to-noise ratio in the MLR model is not symmetric with respect to $k$.
In addition, from Fig.\ref{fig0}, we further observe that if $\mathbf{g}$ and $\mathbf{f}$ is positively correlated, i.e., $k >0$, then SLR performs better when a lot of samples get treatment ($p$ is large, and $\Delta <0$). This mens that the operator should trust the result by regressing on $T_i$, when the majority of samples are in the treatment group.

When $\mathbf{g}$ and $\mathbf{f}$ is negatively correlated, SLR performs better when a few samples get treatment ($p$ is small, and $\Delta <0$). This suggests that it is better to perform SLR when only a few samples are in the treatment group, i.e., when the treatment signals are scarce. For demand response, this is a regime of interest, since most customers receive relatively few DR signals, and the correlation between the main effect and treatment effect is negative. For example, considering temperature as the covariate, then the consumption increases together with temperature because of higher cooling needs. However, when temperature is high, people may be more reluctant to reduce consumption (turn off AC's in this case) to respond to DR signal.

From Fig. \ref{fig0}, we also observe that $\Delta$ changes sign when altering $p$ and fixing $k =0$. This is another interesting regime of demand response, when the treatment effect of each unit, i.e., $g_i$, is linear in the covariates, and the normal consumption is a constant ($f_i = f_c$ is a constant). In this case, the linear regression model is expressed in \eqref{eq:MCM_var}:
	\begin{equation}\label{eq:MCM_var}
	Y_i = T_i{x_i}\gamma + f_c + \epsilon_i,
	\end{equation}
where $g_i = x_i\gamma.$

This regime of interest in demand response. For example, the normal consumption without demand response does not appreciably but the treatment effect may depend on the covariates. In this case, $\Delta =  (2p-3p^2)(\Cov (\mathbf{g}, \mathbf{x}))^2$ meaning that as long as $0<p<\frac{2}{3}$, MLR is better than SLR.

\subsection{Comparison between SLR/MLR and MCM}\label{subsec2}
We can also analyze the performance of MCM with SLR and MLR in this specific case. We again restrict ourselves to one dimensional analysis (the covariate $x_i$ is one dimensional), as in last section. The main claim is shown in Remark \ref{remarkMCM}, with the assumption that the covariate follows a Gaussian distribution with unit variance and mean denoted by $\mu$. An illustration of a more general case with multi-dimensional covariate is shown at the end of this section.


\begin{remark}\label{remarkMCM}
	Consider the setup in~\eqref{eq:MCM_var}, where the normal consumption $f_i$ is a constant and the treatment effect to the signal $g_i$ is linear in the covariate $x_i$. Suppose that the covariate $x_i$ follows a Gaussian distribution with mean $\mu$ and unit variance. Then the performances of the three estimators depend both on the value of $p$ and $\mu$. We find that MCM performs the best when $p$ is relatively small and SLR performs the best when $p$ is relatively large. Otherwise, MLR yields the best estimator for ATE, i.e., $\bar{g}$, when $p$ takes on moderate values between 0 and 1. The variance of the proposed linear estimators are shown in \eqref{eq:MCMvarM}, \eqref{eq: SLRvarM} and \eqref{eq: MLRvarM}:
		\begin{subequations}
		\begin{align}
		\Var (\hat{\bar{g}}_{MCM} - \bar{g}) & = \Var ((\hat{\gamma}-\gamma)\frac{\sum_ix_i}{N} ) \nonumber \\
		& \approx \frac{\gamma^2\mu^2p^2(3+\mu^2)}{Np(1-p)(1+\mu^2)^2}, \label{eq:MCMvarM} \\
			\Var (\hat{\bar{g}}_{SLR}-\bar{g}) &= \frac{(1-p)^2\gamma^2}{p(1-p)N}, \label{eq: SLRvarM} \\
				\Var (\hat{\bar{g}}_{MLR}-\bar{g}) &= \frac{(2p-1)^2\gamma^2}{p(1-p)N}. \label{eq: MLRvarM}
		\end{align}
	\end{subequations}
		%
		%
		%

\end{remark}

To understand Remark \ref{remarkMCM} better, we consider an example with a nonzero $\mu$.
For more details, {please refer to appendix}.

%
Figure \ref{figMCM_SLR} compares MCM and SLR and Fig. \ref{figMCM_MLR} compares MCM and MLR. As can be seen from Fig. \ref{figMCM_SLR} and Fig. \ref{figMCM_MLR}, generally MCM performs better when $p$ is relatively small and worse when $p$ is relatively big. 
{To illustrate that the estimators are different, consider the case when $\mu = 1$. We find that when $p$ is close to 1, then SLR performs the best. On the contrary, when $p$ is close to 0, then MCM yields the smallest variance. When $p$ is around 0.5, MLR outperforms both SLR and MCM in terms of variance reduction. Therefore, if the model in \eqref{eq:MCM_var} correctly captures the consumption behavior, when the treatment signal is scarce, it is preferable to use MCM to estimate ATE. A more general case study is left to the appendix for interested readers.}

\begin{figure}[!t]
	\centering
	\includegraphics[width = 1.0\columnwidth]{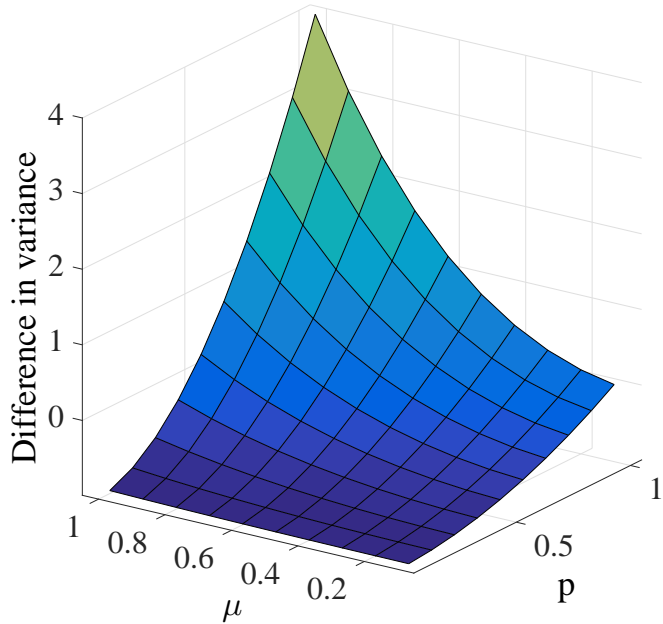}
	\caption{Difference of the variance obtained by MCM and SLR as a function of $\mu$ and $p$.}
	\label{figMCM_SLR}
\end{figure}

\begin{figure}[!t]
	\centering
	\includegraphics[width = 1.0\columnwidth]{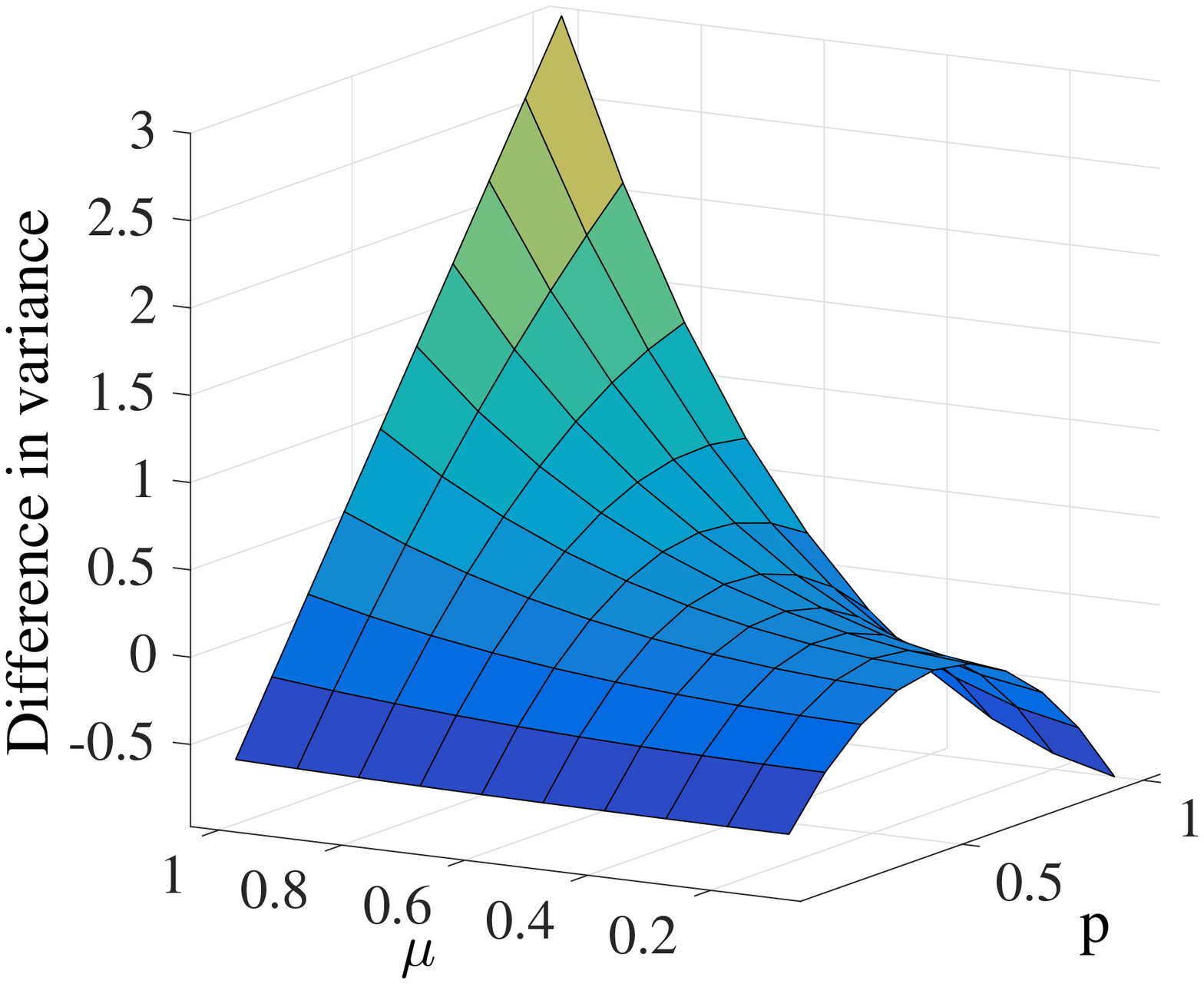}
	\caption{Difference of the variance obtained by MCM and MLR as a function of $\mu$ and $p$.}
	\label{figMCM_MLR}
\end{figure}

Overall, based on the discussion in Section \ref{sec:varanalysis}, the observation of the variance is summarized as the following: 
\begin{itemize}
	\item If $\mathbf{g}$ and $\mathbf{f}$ is positively correlated, then MLR performs better when $p$ is small.
	\item If $\mathbf{g}$ and $\mathbf{f}$ is negatively correlated, then MLR performs better when $p$ is large.
	\item If $\mathbf{g}$ is not correlated with $\mathbf{x}$, then MLR performs better.
	\item If $\mathbf{f}$ is not correlated with $\mathbf{x}$, then MLR performs better if $p$ is relatively small ($p<\frac{2}{3}$), otherwise SLR performs better.
	\item If the covariate $x_i$ is one dimensional, $g_i $ is linear in $x_i$, and $f_i $ is a constant across all sample $i$, MCM works better when a few samples get treatment signal, SLR works better when many samples get treatment signal, and MLR works better when a moderate number of samples get treatment signal.\footnote{This requires a technical condition of $\sum_i x_i \neq 0$, which is easily satisfied since typical covariates used in demand response such as temperature or occupancy are not zero centered.} 
\end{itemize}

\section{Case Study}\label{sec:sim}

In this section, we conduct experiments on data from three sources: synthetic data, building data, and data from Pecan Street \cite{pecan}.
\subsection{Synthetic Data}
We first generate data from two linear models, with one dimensional covariate $x_i$ drawn from a Gaussian distribution with unit mean and unit variance. The models are:
\begin{subequations}
	\begin{align}
		\label{syntheticlinear1}
			Y_i &= {{x}_i\alpha_1} + {x}_i\alpha_2 T_i ,\\
		\label{syntheticlinear2}
		Y_i &= \alpha_0 + {x}_i\alpha_2 T_i .
	\end{align}
\end{subequations}

We take $\alpha_1 = 6$ and $\alpha_2 = 20$, so that $ k = 0.3$ for (\ref{syntheticlinear1}). From Theorem \ref{theorem1}, this means that SLR performs better if $p>0.77$. For (\ref{syntheticlinear2}) $k = 0$, since the main effect is a constant. We further set $\alpha_0 = 20$, and the simulated covariate to be centered at 1. {This is the regime discussed at in Section \ref{sec:varanalysis}, with the case where main effect $f_i$ is a constant and that treatment effect is linear, i.e., $g_i  = {x}_i\alpha_2 $. }The results are shown in Fig. \ref{figlinear}.

\begin{figure}[!t]
	\centering
	\includegraphics[width = 1.0\columnwidth]{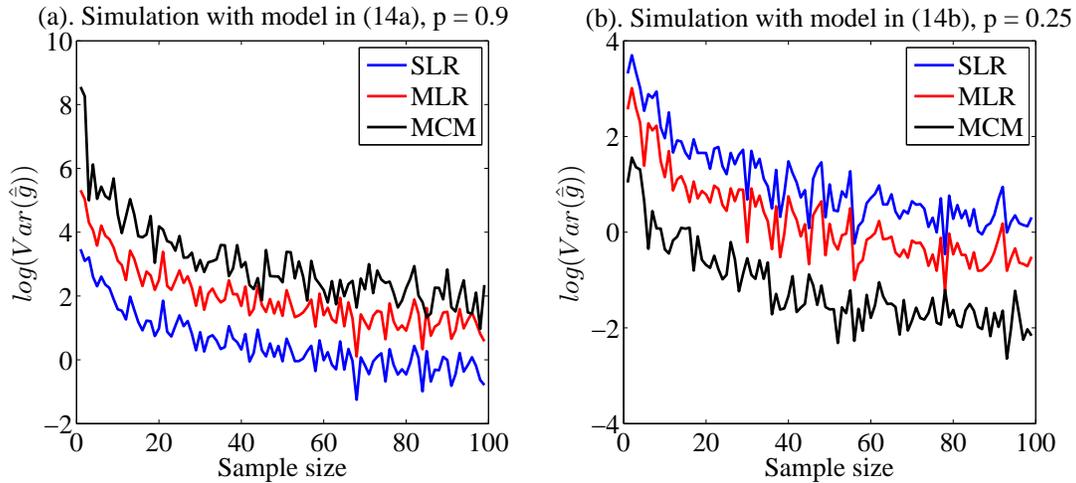}
	\caption{Variance of the three estimators of linear model in semi-log scale. Left figure for (\ref{syntheticlinear1}) with $p = 0.9$, right figure for (\ref{syntheticlinear2}) with $p = 0.25$.}
	\label{figlinear}
\end{figure}

As can be seen from Fig. \ref{figlinear}, the performances of the three estimators depend on both the assignment probability $p$ and the structure of the underlying model. The results from Fig. \ref{figlinear}(a) supports our claim that when the treatment assignment probability is high, i.e., $p = 0.9$, SLR performs the best with model in (\ref{syntheticlinear1}). As for the model presented in (\ref{syntheticlinear2}), {according to the discussions in Section \ref{sec:varanalysis}}, MCM is the best estimator with least variance, which is demonstrated in Fig. \ref{figlinear}(b).

\subsection{Building data}
	In this section, we validate the claim in the paper via building data generated using EnergyPlus~\cite{sim}. Using this software, we generate building covariates such as environment temperature, number of occupants, appliances scheduling, etc. {They are denoted by a vector $\mathbf{x}_i$ for each single observation $i$. The building model is illustrated in Fig. \ref{building} at Section \ref{sec:motiv}. In this specific residential building, there are four floors with eight apartment located at each floor and office located at the first floor. The total floor area is 3135 squared meters.}

In this simulation, the sample index $i$ denotes different time slots of the building consumption model. The treatment $T_i$ is added to $f_i$, so the final output of the simulated building consumption under DR signal is $Y_i = f_i + g_iT_i$, where $g_i$ is the treatment effect of building $i$. To validate the claims in this paper, we consider two scenarios: 1) $g_i$ is a constant (when treatment effect is a constant across all sample points); 2)
$g_i$ is linear in $\mathbf{x}_i$ (when treatment effect is linear in the covariates for each sample point).

	\begin{table}[!ht]
		\renewcommand{\arraystretch}{1.3}
		\caption{ $\Var \hat{\bar{g}}$ (normalized) based on EnergyPlus data, where $p = 0.5$ and $g_i$ is a constant.}
		\centering
		\begin{tabular}{|c|c|c|}
			\hline \bfseries SLR  & \bfseries MLR & \bfseries MCM\\

			  \hline
			  1.000  &  0.100  &  1.506 \\
			\hline
		\end{tabular}
		 \label{table3}
	\end{table}

	The results of the two scenarios are shown in Table \ref{table3} ($g_i$ is a constant) and Table \ref{table4} ($g_i$ is linear in $\mathbf{x}_i$).
	When $g_i$ is a constant, as shown in Table \ref{table3}, MLR performs the best among all the three methods because the variance of ATE is the lowest. This validates the proposed claim in Theorem \ref{theorem1} that as long as $g_i$ is a constant, MLR is always the best linear estimator, for all possible values of $p$.

	However, it is not always true that the treatment effect $g_i$ is a constant for all types of buildings, we therefore generate another set of DR simulations with $g_i = \mathbf{x}_i^{\top}\gamma$, i.e., that the treatment effect is linear in the covariates of the particular building $i$. 
	We compare the estimators based on their variance and the results are illustrated in Table \ref{table4}, where the treatment assignment probability $p = 0.15$. From Table \ref{table4}, we observe the opposite as to Table \ref{table3}, that MLR yields the worst performance among all three estimators. In addition, MCM and SLR have similar performances. {Therefore, contrary to the case in Table \ref{table3}, the utility company should not blindly trust the results from a seemingly more powerful model, i.e., MLR, and should be careful about the interactions between covariates when modeling consumption behavior.} In all, the observations from Table \ref{table3} and Table \ref{table4} validate the claim in this paper, that it is not always good to conduct a full linear regression with all the covariates in the model, since the inclusion of those covariates may lead to a larger noise in the model if the linear regression model is not correctly defined.

	\begin{table}[!ht]
		\renewcommand{\arraystretch}{1.3}
		\caption{$\Var \hat{\bar{g}}$ (normalized) based on EnergyPlus data, where $p = 0.15$ and $g_i$ is linear in $\mathbf{x}_i$.}
		\centering
		\begin{tabular}{|c|c|c|}
			\hline
		  \bfseries SLR  & \bfseries MLR & \bfseries MCM \\
		\hline
		1.000 & 3.191 & 1.020 \\
		\hline
		\end{tabular}
		 \label{table4}
	\end{table}

\subsection{Pecan Street Data}
In this section, we test the estimators on data from Pecan Street~\cite{pecan}. In the tests, we treat the high price signals as treatments and the user index as the sample point index. The outcomes are these users' consumption data. To compose the treatment group and control group, we extract the high price signal and include users whose consumption data is available at that time into the treatment group, and include the other users into the control group. For the users in the control group, we find their consumption data at the same hour in the date closest to the high price signal date. This mimics the situation where the signals are randomized assigned, since each user has some chance of receiving a specific signal. Temperature is a primary regressor that researchers use in practice, so we include temperature into the linear regression model as covariate. Other covariates such as appliance information can easily be added.

Since we do not know the true ATE and the true model for observational data, we use $p$-values associated with the $t$-test and the $F$-test to make comparisons with Pecan Street data. These tests are hypothesis tests for linear regression models with Gaussian noise~\cite{booklg}. The difference between $t$-test and $F$-test is that $t$-test only examines whether including one particular regressor significantly improve the model whereas the $F$-test examines whether including all regressors significantly improve the model. Suppose that we just include one covariate into the model, the MLR model for sample $i$ is in the following form:
\begin{equation}
	\begin{aligned}
	Y_i & = \bd w_i ^{\top} \bm{\beta} + \epsilon_i\\
	& = \begin{bmatrix}
	T_i -p\\
	{x}_i\\
	1
	\end{bmatrix} ^{\top}
	\begin{bmatrix}
	\beta^{(1)}\\
	{\beta}^{({x})}\\
	\beta^{(0)}
	\end{bmatrix}
	+ \epsilon_i,
	\end{aligned}
\end{equation}
where $\bar{g} = \beta^{(1)}$ is the ATE to a specific DR signal, $T_i-p$ is the centered binary indicator variable for DR signal and ${x}_i$ is the covariate. The null hypothesis for the $t$-test in this regression model is given as:

\begin{equation}
H_0:	 \beta^{(1)} = 0,
\end{equation}
and for the $F$-test:
\begin{equation}
H_0:	 \beta^{(1)} = {\beta}^{({x})} = 0.
\end{equation}

For both tests, we examine the significance by setting a confidence level $\alpha$, normally taken as 0.05, or more strictly as 0.01. While comparing the values of a certain statistic under different models does not seem intuitive, we can alternatively resort to $p$-value, which is defined as the probability of obtaining the observed(or more extreme) result under the null hypothesis. Higher $p$-values suggest that the null hypothesis is true, whereas smaller $p$-values suggest the opposite. We then can compare the $p$-value to interpret the significance test under different regression models.

In addition, the treatment group has 100 observations and the control group has 500 observations.
{The estimation of ATE from the three models are shown in Table \ref{table_est_pecan}. As can be seen from Table \ref{table_est_pecan}, the estimation obtained by MLR is lower than that from SLR, we then perform MCM and its estimation of ATE is again higher than MLR. These results suggest that MLR might have returned an underestimate of ATE, even with extra regressors in the regression model.}
\begin{table}[!ht]
	\renewcommand{\arraystretch}{1.3}
	\caption{Estimation results for pecan street data.}
	\centering
	\begin{tabular}{|c|c|}
		\hline
		\bfseries  & \bfseries Estimation of ATE \\
		\hline
		$\hat{\bar{g}}_{SLR}$  & 1.16 \\
		\hline
		$\hat{\bar{g}}_{MLR}$ & 0.59 \\
		\hline
		$\hat{\bar{g}}_{MCM}$ & 0.90 \\
		\hline
	\end{tabular}
	\label{table_est_pecan}
\end{table}

{We further examine the performance of the models by significance tests.} The results are shown in Table \ref{table1}. From the results in Table \ref{table1}, we can see that the $p$-value with the $F$-test for all methods is generally small, meaning that the consumption data cannot be explained by just an intercept. {However, the $p$-value associated with the $t$-test is the highest for MLR, suggesting the insignificance of regressing on the treatment variable if the threshold is 0.01.} This is mainly due to the lack of information on how the covariates interact with consumption data and that the treatment group is much bigger than the control group. So if an utility uses MLR to estimate the ATE, it may conclude that the DR program is ineffectual by mistake. It is then beneficial to run more significance tests in SLR and MCM to gain more insights to the significance of the treatment effect. Also, we argue that although including covariates into the model may seem to improve prediction (smaller $p$-value for the $F$-test), it does not necessarily lead to a better inference.
\begin{table}[!ht]
	\renewcommand{\arraystretch}{1.3}
	\caption{Significance results for pecan street data.}
	\centering
	\begin{tabular}{|c|c|c|}
		\hline
		\bfseries  & \bfseries $p$-value for $t$ test  & \bfseries $p$-value for $F$ test \\
		\hline
		SLR  & 2.7e-7 & 2.7e-7\\
		\hline
		MLR  & 1.4e-2 & 1.4e-4\\
		\hline
		MCM & 2.9e-09 & 2.9e-09\\
		\hline
	\end{tabular}
	\label{table1}
\end{table}

Nevertheless, if the treatment effect is linear in the covariates, we can always proceed with MCM.  From Table \ref{table1} we can see that the $p$-value for $t$-test with MCM is small, suggesting that we should regress on the modified covariate. In this case, the SLR and MLR estimates agree, providing confidence that the ATE is close to 1 rather than the value of 0.59 as suggested by MLR. 

\section{Conclusion and Future Work}\label{sec:conclusion}
In this paper, we estimate the average treatment effect~(ATE) of demand response programs, defined as the average change in consumption when users receive DR signals.
We derive linear estimators for ATE through simple linear regression, multiple linear regression and modified covariate method. The simulation results show that although including more covariates may be good for prediction purposes (as in multiple linear regression), the performance of the estimators depend both on the assignment probability and the correlation between the effect and the covariate.
 Thus, the interactions between the covariates and the demand response signal must be carefully modeled and we provide practical guidance on which estimators should be used based on both synthetic and real data. This work provides a framework for further research in applying causal inference in analyzing consumption data and DR interventions.

\bibliographystyle{IEEEtran}
\bibliography{refs}
\section*{Appendix}
\subsection{Consistency of SLR estimator}

The estimator in (\ref{est1}) is consistent. To show this, we divide both the nominator and denominator by $N$. Since $T_i$'s are i.i.d. random variables, the denominator converges to a constant value by the strong law of large numbers:
\begin{equation}\label{convergence1}
\sum_i (T_i-p)^2/N
\overset{a.s.}{\rightarrow} \ p(1-p).
\end{equation}

Similarly, the nominator converges to zero due to the strong law of large numbers. Based on these convergence analysis, and with Slutsky's theorem~\cite{theorem}, we conclude that the estimator is consistent.

\subsection{Second order approximation}

The following lemma states the second order approximation~\cite{bookvar}:
\begin{lemma}\label{Lemma1}
	The variance of a ratio of two random variables can be approximated by:
	\begin{equation}
	\Var \frac{X}{Y} \approx \frac{\Var X}{(\E Y)^2} +2\frac{-\E X}{\E ^3Y}\Cov(X,Y) + \frac{\E^2X}{\E^4Y}\Var Y.
	\end{equation}

	If the numerator has zero mean, i.e., $\E X = 0$, then the variance is simplified as:
	\begin{equation}
	\Var(\frac{X}{Y}) \approx \frac{\Var(X)}{(\E Y)^2}.
	\end{equation}
	\end{lemma}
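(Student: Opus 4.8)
The plan is to obtain the approximation by linearizing the map $(x,y)\mapsto x/y$ about the mean point and then computing the variance of the resulting linear form exactly. Write $\mu_X=\E X$ and $\mu_Y=\E Y$, and set $h(x,y)=x/y$. First I would record the partial derivatives $h_x(x,y)=1/y$ and $h_y(x,y)=-x/y^2$, evaluated at the mean point to get $h_x(\mu_X,\mu_Y)=1/\mu_Y$ and $h_y(\mu_X,\mu_Y)=-\mu_X/\mu_Y^2$. The first-order multivariate Taylor expansion then reads
\[
\frac{X}{Y}\;\approx\;\frac{\mu_X}{\mu_Y}+\frac{1}{\mu_Y}(X-\mu_X)-\frac{\mu_X}{\mu_Y^2}(Y-\mu_Y),
\]
where the omitted terms are quadratic in the deviations $X-\mu_X$ and $Y-\mu_Y$.

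The second step is to take the variance of the right-hand side. The constant $\mu_X/\mu_Y$ drops out, so what remains is $\Var(aX+bY)$ with $a=1/\mu_Y$ and $b=-\mu_X/\mu_Y^2$. Using the identity
\[
\Var(aX+bY)=a^2\Var X+b^2\Var Y+2ab\,\Cov(X,Y)
\]
and substituting these values of $a$ and $b$ yields exactly
\[
\Var\frac{X}{Y}\;\approx\;\frac{\Var X}{\mu_Y^2}+\frac{\mu_X^2}{\mu_Y^4}\Var Y-2\,\frac{\mu_X}{\mu_Y^3}\Cov(X,Y),
\]
which is the claimed formula since $\mu_X=\E X$, $\mu_Y=\E Y$. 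The special case is then immediate: if $\E X=0$, every term carrying a factor $\mu_X$ vanishes, leaving $\Var(X/Y)\approx\Var(X)/(\E Y)^2$.

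The delicate point — and the reason the statement is written with ``$\approx$'' rather than ``$=$'' — is the control of the discarded quadratic remainder in the Taylor expansion. Making this rigorous requires a regime in which the higher-order contributions are genuinely negligible: for instance $\mu_Y$ bounded away from $0$ together with small coefficients of variation of $X$ and $Y$, or, as used elsewhere in this paper, an asymptotic scaling in which $N\to\infty$ and the deviations shrink like $1/\sqrt{N}$. In that setting the expansion is precisely the classical delta method, and a Slutsky-type argument (of the same flavor already invoked for the consistency claim above) turns the heuristic into a limiting statement. Since only the leading-order expression is needed for the variance comparisons in this paper, I would present the result as an approximation and flag the neglected terms as lower order in the relevant scaling rather than work out explicit remainder bounds.
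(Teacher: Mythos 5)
Your derivation is correct: the first-order Taylor linearization of $X/Y$ about $(\E X,\E Y)$ followed by the exact variance of the resulting linear form reproduces the stated formula term by term, and the zero-mean simplification follows immediately. This is precisely the standard delta-method argument; the paper itself offers no proof of this lemma (it only cites a reference), so your write-up supplies exactly the expected derivation, and your closing caveat about the neglected higher-order remainder correctly identifies why the statement is an approximation rather than an identity.
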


	Note that in Lemma 1, both $X$ and $Y$ are random variables. We can also define the covariance (variance) with respect to two vectors. For instance, given two vectors $\mathbf{g}$ and $\mathbf{x}$, the normalized vectors are $\mathbf{g} - \bar{\mathbf{g}}$ and $\mathbf{x} - \bar{\mathbf{x}}$. The covariance between $\mathbf{g}$ and $\mathbf{x}$ is $(\frac{\mathbf{g} - \bar{\mathbf{g}})^{\top}(\mathbf{x} - \bar{\mathbf{x}})}{N}$, where $N$ is the length of the two vectors.

\subsection{Proof of Theorem \ref{theorem1}}

Before proving Theorem \ref{theorem1}, we give the close form formulation of the estimator of ATE under both SLR and MLR.

First we derive the variance of the estimator from SLR. Recall that the estimator can be written as:
\begin{equation}
\hat{\bar{g}}_{SLR} =  \bar{g} + \frac{\sum_i (T_i-p)T_i(g_i-\bar{g})+\sum_i (T_i-p)(f_i-\bar{f})}{\sum_i (T_i-p)^2}.
\end{equation}

The nominal variance for $\hat{\bar{g}}_{SLR}$ can be expressed as:
\begin{equation}
\begin{aligned}
& \Var(\hat{\bar{g}}_{SLR}-\bar{g}) \\
& = \Var \frac{\sum_i (T_i-p)T_i(g_i-\bar{g})+\sum_i (T_i-p)(f_i-\bar{f})}{\sum_i (T_i-p)^2}.
\end{aligned}
\end{equation}

Since $\E \sum_i (T_i-p)T_i(g_i-\bar{g})+\sum_i (T_i-p)(f_i-\bar{f}) = 0$, according to Lemma \ref{Lemma1}, the variance is then approximated as:
\begin{equation}\label{SLRvar}
\begin{aligned}
& \Var(\hat{\bar{g}}_{SLR}-\bar{g}) \\
& \overset{(a)}{\approx} \frac{\Var \sum_i (T_i-p)T_i(g_i-\bar{g})+\sum_i (T_i-p)(f_i-\bar{f}) }{(\E \sum_i (T_i-p)^2)^2} \\
& = \frac{\Var \sum_i (T_i-p)T_i(g_i-\bar{g})+\sum_i (T_i-p)(f_i-\bar{f}) }{p^2(1-p)^2N^2} \\
& \overset{(b)}{=} \frac{ (1-p)^2 \Var \mathbf{g} + \Var \mathbf{f}  + 2(1-p)\Cov (\mathbf{g},\mathbf{f})}{p(1-p)N} ,
\end{aligned}
\end{equation}
where $(a)$ follows from Lemma 1, and $(b)$ follows from the fact that $T_i$ is the only random variable and that $f_i$ and $g_i$ takes some fixed value. We also denote $\Var \mathbf{g} = \frac{\sum_i (g_i-\bar{g})^2}{N}$, $\Var \mathbf{f} = \frac{\sum_i (f_i-\bar{f})^2}{N}$, and $\Cov (\mathbf{g}, \mathbf{f} )= \frac{\sum_i (g_i-\bar{g})(f_i-\bar{f})}{N}$ .

With this expression, we can proceed to compute the variance of the estimator from MLR and prove Theorem \ref{theorem1}.

\begin{proof}[Proof of Theorem \ref{theorem1}]

To derive the expression of the variance from MLR, we first assume that the covariate is one dimensional, i.e., $\mathbf{x}_i = x_i$, and without loss of generality, we also assume that $x_i$ has unit variance. The variance for the estimator in MLR can be expressed as:
\begin{equation}\label{MLRvar}
\begin{aligned}
\Var(\hat{\bar{g}}_{MLR}-\bar{g}) = \Var(\hat{\bar{g}}_{SLR}-\bar{g}) - \frac{\Delta}{p(1-p)N},
\end{aligned}
\end{equation}
where:

\begin{equation} \label{Delta}
\begin{aligned}
\Delta  &= (\Cov(\mathbf{f}, \mathbf{x}))^2 + 2(1-p)\Cov (\mathbf{g}, \mathbf{x})\Cov (\mathbf{f}, \mathbf{x}) \\
+ & (2p-3p^2)(\Cov (\mathbf{g}, \mathbf{x}))^2.
\end{aligned}
\end{equation}


The expression in (\ref{MLRvar}) bridges the variance between SLR and MLR. We can compare the performances of these two estimators on the sign of $\Delta$. If $\Delta >0$, then MLR has a smaller variance than SLR and vise versa. This observation leads to Theorem \ref{theorem1}, where we establish the comparison of performances between regression only on treatment indicator and on more information. This comparison depends on the assignment probability $p$ and the correlation of the covariate between the response to the treatment and the response without treatment. The surprising result shown in Theorem \ref{theorem1} is that it is not always good to include more covariates into the regression to obtain a better estimate of ATE.

With the expression in (\ref{Delta}), we now proceed to prove the claims shown in Theorem \ref{theorem1}.

		Write $\Delta$ as a quadratic function of $p$:
		\begin{equation}
		\begin{aligned}
		\Delta & = -3(\Cov (\mathbf{f}, \mathbf{x}))^2p^2 \\
		+ & [2(\Cov (\mathbf{g}, \mathbf{x}))^2-2\Cov (\mathbf{f}, \mathbf{x})\Cov (\mathbf{g}, \mathbf{x})]p \\
		+ & [(\Cov (\mathbf{f}, \mathbf{x}))^2 + 2\Cov (\mathbf{f}, \mathbf{x})\Cov (\mathbf{g}, \mathbf{x})]. \\
		\end{aligned}
		\end{equation}

		If $\Cov (\mathbf{g}, \mathbf{x})= 0$, then $\Delta = (\Cov (\mathbf{f}, \mathbf{x}))^2 \geq 0$.

		If $p = 0.5$, $\Delta = (0.5 \Cov (\mathbf{g}, \mathbf{x}) + \Cov (\mathbf{f}, \mathbf{x}))^2 \geq 0$.

		Otherwise, we have $\Delta < 0$ if $p$ satisfies the following condition:
		\begin{equation}
		\text{max} (\frac{2+ k}{3}) < p <1,
		\end{equation}
		or
		\begin{equation}
		o<p<\text{min}(\frac{2+k}{3}, -k).
		\end{equation}

		The region of ($p$, $k$) where $\Delta \leq 0$ is shown in Fig. \ref{fig0}.

\end{proof}

\subsection{Validation for Remark \ref{remarkMCM}}
To validate the claim in Remark \ref{remarkMCM}, we first consider the linear regression model in the following form:

\begin{equation}\label{modelMCM_constant}
Y_i = T_i{x_i}\gamma + f_c + \epsilon_i.
\end{equation}

In this case, we fix $f(x_i) = f_c$ to be a constant and that the treatment effect $g_i$ is linear in covariate $x_i$. The covariate $x_i$ is assumed to have unit empirical variance and empirical mean as $\mu$. Based on (\ref{MCM1}), the estimator for $\gamma$ is :
\begin{equation}\label{gamma}
\begin{aligned}
\hat{\gamma} & = \gamma + \frac{\sum_i[N(T_i-p)x_i-\sum_j(T_j-p)x_j]p(x_i-\mu)\gamma}{N\sum_i(T_i-p)^2x_i^2 - (\sum_j(T_j-p)x_j)^2} .\\
\end{aligned}
\end{equation}

Again, notice that $\E \{ \sum_i(N(T_i-p)x_i-\sum_j(T_j-p)x_j)px_i\gamma \}= 0$, based on \eqref{gamma}, we have:
\begin{equation}\label{MCMvar}
\begin{aligned}
\Var (\hat{\gamma}-\gamma) & \overset{(a)}{\approx} \frac{\Var \sum_i[(T_i-p)x_i-\sum_j\frac{(T_j-p)x_j}{N}]p(x_i-\mu)\gamma}{\{\E (\sum_i(T_i-p)^2x_i^2 - (\sum_j\frac{(T_j-p)x_j}{N})^2N)\}^2} \\
& \overset{(b)}{=}  \frac{\Var \sum_i[(T_i-p)x_i-\sum_j\frac{(T_j-p)x_j}{N}]p(x_i-\mu)\gamma}{\{\sum_i \{\Var [(T_i-p)x_i] - \Var [\sum_j\frac{(T_j-p)x_j}{N}]\}\}^2},
\end{aligned}
\end{equation}
where $(a)$ follows from Lemma 1 and the division by $N$ from both the numerator and the denominator. Equality $(b)$ is due to the fact that $\E (T_i-p) = 0$, so $\E(T_i-p)^2 = \Var (T_i-p)$. Same applies to  $\E \sum_j\frac{(T_j-p)x_j}{N}$. So both the numerator and denominator in (\ref{MCMvar}) involves calculating the difference between the variance of $(T_i-p)x_i$ and $\sum_j\frac{(T_j-p)x_j}{N}$. We will simplify this calculation by ignoring the term $\sum_j\frac{(T_j-p)x_j}{N}$ in both the numerator and the denominator. This operation is due to the fact that the variance of $\sum_j\frac{(T_j-p)x_j}{N}$ decays as $O(\frac{1}{N})$. When compared with $\Var [(T_i-p)x_i]$, they constitute higher order terms and can be ignored.

This simplification reduces (\ref{MCMvar}) to a more much compact form:

\begin{equation}
\begin{aligned}
\Var (\hat{\gamma}-\gamma)
& \overset{(a)}{\approx} \frac{\Var \sum_i[(T_i-p)x_i]p(x_i-\mu)\gamma}{\{\sum_i \{\Var [(T_i-p)x_i]\}\}^2} \\
& = \frac{\gamma^2p^3(1-p)\sum_i x_i^2(x_i-\mu)^2}{N^2p^2(1-p)^2(\frac{\sum_ix_i^2}{N})^2},
\end{aligned}
\end{equation}
where $(a)$ again follows from Lemma 1.

As we assume that the covariate follows a Gaussian distribution and has unit variance, i.e., $\frac{\sum_i (x_i-\mu)^2}{N} = 1$, then the variance of the estimator for $\gamma$ can be written as:
\begin{equation} \label{vargamma}
\begin{aligned}
\Var (\hat{\gamma}-\gamma) & \approx \frac{\gamma^2p^2(3+\mu^2)}{Np(1-p)(1+\mu^2)^2}.
\end{aligned}
\end{equation}

Based on (\ref{vargamma}), we construct the variance for the estimation of $\bar{g}$ in MCM:
\begin{equation}\label{MCMvarM}
\begin{aligned}
\Var (\hat{\bar{g}}_{MCM} - \bar{g}) & = \Var ((\hat{\gamma}-\gamma)\frac{\sum_ix_i}{N} )\\
& = \Var (\hat{\gamma}-\gamma)\mu^2 \\
& \approx \frac{\gamma^2\mu^2p^2(3+\mu^2)}{Np(1-p)(1+\mu^2)^2},
\end{aligned}
\end{equation}

With (\ref{MCMvarM}), (\ref{SLRvar}), and (\ref{MLRvar}), we can compare the performance of MCM with that of SLR/MLR. Note that when $f_i$ is a constant and $g_i = x_i\gamma$, the variance of SLR/MLR estimators are reduced to a simpler form which leads us to the following forms:
\begin{equation}\label{SLRvarM}
\Var (\hat{\bar{g}}_{SLR}-\bar{g}) = \frac{(1-p)^2\gamma^2}{p(1-p)N}
\end{equation}
and
\begin{equation}\label{MLRvarM}
\Var (\hat{\bar{g}}_{MLR}-\bar{g}) = \frac{(2p-1)^2\gamma^2}{p(1-p)N}.
\end{equation}

\subsection{Performance comparison in a non-linear case}
{Apart from the considered cases, a more general and interesting case is when $f_i$ in is non-linear in $x_i$. For example, the consumption is non-linear in temperature, or the thickness of the wall fabric. As an illustrating example of the nonlinearity, we construct the following outcome:}
\begin{equation}\label{outcome2}
Y_i = \sqrt[4]{|\sum_{j=1}^{d}{x}_{i,j}^3\gamma_j|} + \mathbf{x}_i^{\text{T}}\alpha T_i.
\end{equation}

{It is hard to derive a close form formulation of the variance of the three estimators for general non-linear terms. Alternatively, we simulate the empirical variance of the three estimators in (\ref{outcome2}) by discretized values of $p$ in the range of (0,1) with an interval 0.1. We again fix the mean of $x_i$, i.e., $\mu$, to be 1.} 
\begin{table}[!ht]
	\renewcommand{\arraystretch}{1.3}
	\caption{Results for performance comparison of the three estimators with the outcome model shown in (\ref{outcome2}). }
	\centering
	\begin{tabular}{|c|c|c|c|}
		\hline
		\bfseries  & \bfseries Best  & \bfseries Medium & \bfseries Worst \\
		\hline
		$p=0.1$  & MCM & MLR & SLR \\
		\hline
		$p=0.2$  & MCM & MLR & SLR\\
		\hline
		$p=0.3$  & MLR & MCM & SLR \\
		\hline
		$p=0.4$  & MLR & MCM & SLR \\
		\hline
		$p=0.5$  & MLR & SLR & MCM \\
		\hline
		$p=0.6$ & MLR & SLR & MCM \\
		\hline
		$p=0.7$  & MLR & SLR & MCM \\
		\hline
		$p=0.8$  & SLR & MLR & MCM \\
		\hline
		$p=0.9$  & SLR & MLR & MCM \\
		\hline
		\end{tabular}
		\label{table01}
		\end{table}

		From Table \ref{table01}, we observe that MCM works the best when $p$ is close to 0 and SLR when $p$ is close to 1, otherwise MLR yields the best estimator. The only difference is that the thresholding values of $p$ changes when $f_i$ is no longer a constant. Thus, in a general case, we prefer to adopt MCM when the treatment group is much smaller than the control group, and SLR in the inverse case. When the treatment group and the control group is of similar size, we prefer using MLR to make an estimate.

\subsection{Simulation on more synthetic data}
We simulate data from more complex models and compare the estimators' performances under four models, i.e., a linear model, a non-linear model, a non-linear model with constant $f_i$, and a model with non-linear $f_i$ but linear $g_i$. The covariates $\mathbf{x}$'s in theses cases are multi-dimensional and they are i.i.d. samples from an arbitrary joint Gaussian distribution with off-diagonal elements as zeros in the covariance matrix. The probability of a treatment assignment is set to be 0.8, 0.9, 0.75 and 0.1 respectively for these four models (model 1 to model 4) presented in (\ref{synthetic1}) to (\ref{synthetic4}), where $d$ is the dimension of the covariate vector $\mathbf{x}_i$.
\begin{subequations}
	\begin{align}
		\label{synthetic1}
		Y_i &= {\mathbf{x}_i^{\text{T}}\gamma} + \mathbf{x}_i^{\text{T}}\theta T_i ,\\
		\label{synthetic2}
		Y_i &= \sqrt[4]{|\sum_{j=1}^{d}{x}_{i,j}^3\alpha_j|} + ({\sum_{j=1}^{d}{x}_{i,j}^2\theta_j + \sum_{j=1,k\neq j}^{d}{x}_{i,j}{x}_{i,k}\theta_{j,k}} )T_i ,\\
		\label{synthetic3}
		Y_i &= \theta_0 + ({\sum_{j=1}^{d}{x}_{i,j}^2\theta_j + \sum_{j=1}^{d}\sum_{k\neq j}^{d}{x}_{i,j}{x}_{i,k}\theta_{j,k}} )T_i ,\\
		\label{synthetic4}
		Y_i &= \sqrt[4]{|\sum_{j=1}^{d}{x}_{i,j}^3\theta_j|} + \mathbf{x}_i^{\text{T}}\theta T_i.
		\end{align}
		\end{subequations}

		The variance decay of the three estimators is shown in Fig.\ref{fig1}. 
		We compare the performance of the estimators based on the magnitude of this variance.
		\begin{figure}[!t]
			\centering
			\includegraphics[width = 1.0\columnwidth]{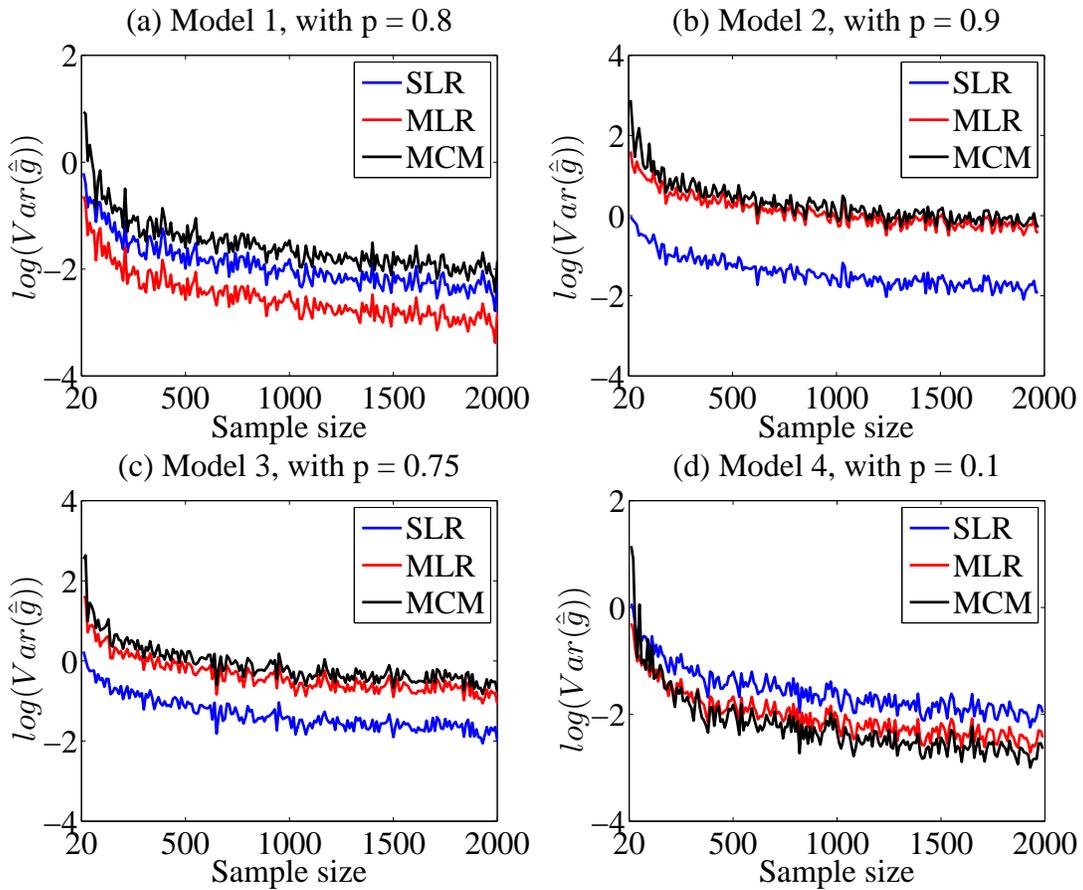}
			\caption{Variance of the three estimators of synthetic data.}
			\label{fig1}
			\end{figure}

			The results generated by nonlinear models from Fig.\ref{fig1} suggests similar claims as in Fig.\ref{figlinear}. We observe that it is not always the case that MLR yield a better estimator, although it may appear beneficial to include covariates into the model in order to improve prediction. As seen from Fig.\ref{fig1}(a), when the model is linear in the covariates, MLR has the best performance with respect to variance reduction. However, if the model is not linear then MLR does not necessarily reduce the variance of the estimator, as shown in Fig.\ref{fig1}(b) through Fig.\ref{fig1}(d). Comparing results from Fig.\ref{fig1}(b) and Fig.1(c), we observe that SLR has the lowest estimator variance in both cases, when neither $f_i$ nor $g_i$ is linear in the covariates. Thus we argue that performing SLR is the safest way to yield an estimator with least variance. What is more, if the treatment effect is linear in the covariates and the probability of treatment assignment is small, then MCM outperforms both SLR and MLR, as shown in Fig.\ref{fig1}(d). It thus serves as a compromise to use covariate information while keeping the estimator's variance low.

\end{document}